\documentclass[12pt, reqno]{amsart}

\usepackage[left=1.1in,top=1.2in,right=1.1in,bottom=1.2in]{geometry}
\usepackage{setspace,kantlipsum}
\usepackage{amsfonts}
\usepackage{amssymb}
\usepackage{amsmath}
\usepackage{amsthm}
\usepackage{latexsym}
\usepackage{graphicx}
\usepackage{xypic}
\usepackage{hyperref}
\usepackage{lineno}
\usepackage{tikz-cd} 
\usepackage{subcaption} 
\usepackage{float}
\usepackage[bottom]{footmisc}
\renewcommand{\thefootnote}{\fnsymbol{footnote}}
\newcommand{\ui}[1]{{\left\vert\kern-0.25ex\left\vert\kern-0.25ex\left\vert #1 
    \right\vert\kern-0.25ex\right\vert\kern-0.25ex\right\vert}}
\makeatletter
\@namedef{subjclassname@2020}{%
  \textup{2020} Mathematics Subject Classification}
\makeatother

\usepackage{doc}
\makeatletter
\def\maketitle{\par
      \begingroup
        \def\thefootnote{\fnsymbol{footnote}}%
      \setcounter{footnote}\z@
      \def\@makefnmark{\hbox to\z@{$\m@th^{\@thefnmark}$\hss}}%
      \long\def\@makefntext##1{\noindent
          \ifnum\c@footnote>\z@\relax
            \hbox to1.8em{\hss$\m@th^{\@thefnmark}$}##1%
          \else
          \hbox to1.8em{\hfill}%
            \parbox{\dimexpr\linewidth-1.8em}{\raggedright ##1}%
          \fi}
      \if@twocolumn\twocolumn[\@maketitle]%
      \else\newpage\global\@topnum\z@\@maketitle\fi
      \thispagestyle{titlepage}\@thanks\endgroup
      \setcounter{footnote}\z@
      \gdef\@date{\today}\gdef\@thanks{}%
      \gdef\@author{}\gdef\@title{}\gdef\@dedicatory{}}

\hypersetup{colorlinks,citecolor=red,pdfstartview=FitH, pdfpagemode=None}

\usepackage{xcolor, soul}

\numberwithin{equation}{section}

\theoremstyle{definition}
\newtheorem{theorem}{Theorem}[section]
\newtheorem{lemma}[theorem]{Lemma}
\newtheorem{proposition}[theorem]{Proposition}

\newtheorem{example}[theorem]{Example}

\newtheorem{remark}[theorem]{Remark}

\renewcommand{\ge}{\geqslant}
\renewcommand{\le}{\leqslant}





\def\ui{\|\hspace{-.25mm} |\,}


\def\B{\mathcal B}
\def\R{\mathbb R}
\def\C{\mathbb C}
\def\Fnn{{\mathbb F}_{n\times n}}
\def\Cnn{{\mathbb C}_{n\times n}}
\def\Rnn{{\mathbb R}_{n\times n}}

\def\F{\mathbb F}

\def\H{\mathbb H}

\def\P{\mathbb P}

\def\S{\mathbb S}


\def\tr{{\rm tr\,}}
\def\rank{\mbox{\rm rank\,}}

\def\Gr{{\mbox{\bf Gr}}}
\def\range{{\mbox{Range}\,}}
\def\Exp{{\rm Exp}}

\begin{document}

\title[Grassmannians]{Geometric Properties and Distance Inequalities on Grassmannians}

\author{Tin-Yau Tam}
\address{Department of Mathematics and Statistics\\ University of Nevada, Reno\\ Reno \\ NV 89557-0084\\ USA}
\email{ttam@unr.edu}

\author{Xiang Xiang Wang}
\address{Department of Mathematics and Statistics\\ University of Nevada, Reno\\ Reno \\ NV 89557-0084\\ USA}
\email{xiangxiangw@unr.edu}

\dedicatory{In memory of Professor Ronald L. Smith (1947-2024) who passed away on October 16, 2024. His contributions to matrix theory and his dedication to the academic community will always be remembered.}
\subjclass[2020]{15A45, 14M15, 53C22}

\keywords{Grassmaniann, orthogonal projectors, geodesic, geometric mean, semi-parallelogram law}
%


\begin{abstract} In this paper we obtain inequalities for the geometric mean of  elements in the Grassmannians. These inequalities reflect the elliptic geometry of the Grassmannians as Riemannian manifolds. These include Semi-Parallelogram Law, Law of Cosines and geodesic triangle inequalities.
\end{abstract}

\maketitle


\section{Introduction}

Let $\F$ denote the real field $\R$ or the complex field $\C$.
Let $\Fnn$ be the space of all $n \times n$ matrices over $\F$. Let $\P_n$  the set of $n\times n$ positive definite matrices in $\Cnn$. The geometric mean of $A, B\in \P_n$ is defined as:
\begin{equation}\label{gm}
A \sharp B := A^{1/2}(A^{-1/2}BA^{-1/2})^{1/2}A^{1/2},
\end{equation}
first introduced by Pusz and Woronowicz \cite {PW75} in 1975 and has since been extensively studied. 
This geometric mean has a natural geometric interpretation, as $\P_n$ is a Riemannian manifold \cite {He78} and the geometric mean $A \sharp B$  represents the mid-point of the unique geodesic joining $A$ and $B$ \cite{RB07, LLT14}. 
The Riemannian structure of $\P_n$ is given by
\[
Q_p(X, Y) = 
   \tr (p^{-1}Xp^{-1}Y),
\]
where $X, Y \in T_p\P_n$, the tangent space of $\P_n$ at $p\in \P_n$.
 This structure turns $\P_n$ into a metric space, where the distance between $A, B\in \P_n$ is
\begin{equation}\label{d_P}
d_{\P_n}(A,B) :=  \Big(\sum_{i=1}^n \log ^2\lambda_i(BA^{-1})\Big)^{1/2}.
\end{equation}
Here  $\lambda_1, \cdots \lambda_n\in \R$ are the eigenvalues of $BA^{-1}$. See \cite{GLT21} for some recent developments on the geometric mean.

In this paper, we consider $t$-geometric mean for Grassmannians $\Gr_{n,k}(\F)$, where $t\in[0,1]$. Grassmannians have nonnegative sectional curvature, while $\P_n$ has nonpositive  sectional curvature. 
Grassmannians  are fundamental in various branches of mathematics, including differential geometry, algebraic geometry, and theoretical physics. Their geometric properties are not only of theoretical interest but also have practical applications, such as in  image recognition \cite{Hu14, Liu19, Souza23}.
As a model for the Grassmannians $\Gr_{n,k}(\C)$, we use orthogonal projection matrices of rank $k$:
$$
\Gr_{n,k}(\C)=\{P\in \Cnn: P^2 =P, \, P^*=P,\ \rank P = k\} \subset \H_n,
$$
where $\H_n$ is the  space of $n\times n$ Hermitian matrices. Its real analog is 
$$
\Gr_{n,k}(\R)=\{P\in \Rnn: P^2 =P, \, P^\top=P,\ \rank P = k\} \subset \S_n,
$$
where $\S_n$ is the space of $n\times n$ real symmetric matrices. 
See \cite {AMS04, YWL22} for more on the manifold structure and properties of Grassmannians. 

We would like to remark, as noted in \cite [Theorem 10]{W67}, that if  $k=1$ or $k=n-1$,  there is a unique  geodesic   between any two points in $\Gr_{n,k}(\F)$. However, when $k\ge 2$ or $n\ge k+2$, there are either countably many or uncountably many geodesics passing through any two points  in $\Gr_{n,k}(\F)$. This contrasts sharply with $\P_n$ which has a unique geodesic for any pair of points $P,Q\in\P_n$ \cite {LLT14}.

 The injectivity radius $R_P$ at $P \in \Gr_{n,k}(\F)$ is the biggest $R>0$ 
 such that the exponential map from a neighborhood of $0\in T_P\Gr_{n,k}(\F)$ to a neighborhood of $P$ is a diffeomorphism. It is positive since the exponential map is a local diffeomorphism at each point $P$ in the compact  $\Gr_{n,k}(\F)$.
The {\it injectivity radius} $r_\F$ of $\Gr_{n,k}(\F)$
 is defined as:
$$
r_\F:=\min \{R_P: P\in \Gr_{n,k}(\F) \} >0.
$$
We define
$$\rho_\F := \min \left\{ \frac {r_\F} 2,\ \frac \pi {2 \delta_{\F} ^{1/2}}  \right\},
$$
where $\delta_{\F}$ is the maximum of the sectional curvatures $\kappa$ of $\Gr_{n,r}(\F)$. 
It is known that every metric ball $\B$ of radius less than $\rho_\F$  is {\it strongly convex}
\cite{K70, GK73}, meaning that for any points  $P, Q\in \B$ there is a unique minimizing geodesic \cite {Cheeger75}, i.e., the geodesic that represents the shortest path, from $P$ to $Q$ in $\Gr_{n,}(\F)$ and this geodesic  lies entirely within $\B$.  According to Wong \cite[Theorems 1-3]{W68a},
$$
\delta_\R \le 2, \quad \delta_\C \le 4.
$$
Thus we have
\[
0< \min \Big \{ \frac {r_\R}2,\ \frac \pi {2 \sqrt 2} \Big \} \le \rho_\R, \quad 0<  \min\Big  \{ \frac {r_\C}2,\ \frac \pi 4 \Big  \} \le\rho_\C. 
\]
In this paper we focus on  the local property of Grassmannians, especially  studying  behaviors within a metric ball $\B$ of radius less than $\rho_\F$ because this ensures the existence of a unique geodesic joining any two points within $\B$.

Let $P, Q\in  \Gr_{n,k}(\F)$ lie within a metric ball $\B$ of radius less than $\rho_\F$. The exponential map  \cite [p.30-32]{He78} at $P$, denoted by $\Exp_P$, is a diffeomorphism from a neighborhood of $0\in T_P\Gr_{n,k}(\F)$ to a neighborhood of $P$. 
 This enables us to define via $\Exp_P$ the tangent vector $\overrightarrow {PQ}$ for the geodesic $\gamma$ joining $P$ and $Q$:
\[
\overrightarrow {PQ}  = \Exp^{-1}_PQ \in T_P\Gr_{n,k}(\F)\quad \mbox{i.e.,}\quad  \Exp_P \overrightarrow {PQ} = Q.
\]
The geodesic $\gamma(t)$  can be expressed as 
\begin{equation}\label{geo-abs}
\gamma(t) = \Exp_P(t\, \Exp^{-1}_PQ) = \Exp_P(t\, \overrightarrow {PQ}), \quad t\in[0,1].
\end{equation} 
The distance between $P$ and any point $R$  lying on $\gamma$ is given by  \cite{MJ63, KH77, MF06}:

\begin{equation}\label{dist-abs}
d(P,R) = \langle \Exp_P^{-1} R, \Exp_P^{-1}R\rangle_P^{1/2},
\end{equation}
where  $\langle \cdot, \cdot \rangle_P$   is the Riemannian inner product at $P$. 

We define the $t$-geometric mean $P\sharp_t Q$ as
\[
 \gamma(t) = P\sharp_tQ, \quad t\in[0,1], \quad \mbox{and}\ P\sharp Q := P\sharp_{\frac 12}Q.
 \]
Under the condition that $(I-2P)(I-2Q)$ has no negative eigenvalues, the $t$-geometric mean $P\sharp_tQ $  can be  explicitly expressed, as described  by 
Batzies, H\"{u}per, Machado, and Leite \cite [p.91]{BK2015},   which we will outline below.
The matrix $r_P$  
$$r_P:=2P-I= P - (I-P),$$ 
is  the reflection through $(\range P)^\perp$, where $I-P$ is the orthogonal projection onto $(\range P)^\perp$, the orthogonal complement of  $\range P$, the range of the matrix $P$, and thus
\begin{equation}\label{reflection}
(I-2P)^2 = I.
\end{equation}
When the unitary (orthogonal for the real case) matrix $(I-2P)(I-2Q)$ has no negative eigenvalues,
\begin{equation}\label{geodesic}
P\sharp_tQ  := \gamma(t) = e^{t\Omega} Pe^{-t\Omega},
\end{equation}
where
\begin{equation}\label{Omega}
\Omega := \frac 12 \log [(I-2Q)(I-2P)]
\end{equation}
and $\overrightarrow {PQ}$ can be expressed as
\begin{equation}\label{Omega,P}
\overrightarrow {PQ}=[P, \Omega]:=P\Omega- \Omega P.
\end{equation}
Hence
 \begin{equation}\label{geodesic1}
\gamma(t) = [(I-2Q)(I-2P)]^{t/2} P [(I-2Q)(I-2P)]^{-t/2}.
\end{equation}
Note that 
\begin{equation}\label{-Omega}
\log [(I-2Q)(I-2P)]=-\log [(I-2P)(I-2Q)]
\end{equation}
 since $ (I-2Q)(I-2P)=[(I-2P)(I-2Q)]^{-1}.$
 Also, from \cite [p.86]{BK2015}, we have 
\begin{equation}  \label{omega}
[P,\Omega]=(I-2P)\Omega=-\Omega (I-2P).
\end{equation}
The distance between $P$ and $Q$ is  given by 
\begin{eqnarray}\label{dist}
d(P,Q) &=& \left[-\frac 14\tr (\log ^2 (I-2Q)(I-2P))\right]^{1/2} \nonumber\\
& =&\frac{1}{4}\|\log(I-2Q)(I-2P)\|_F = \|\Omega\|_F=\|\overrightarrow {PQ}\|_F,
\end{eqnarray}
where $\|\cdot\|_F$ is the Frobenius norm. 

\begin{remark}
  The expressions for the geodesic \eqref{geodesic} and the distance \eqref{dist} hold only when the product \((I-2P)(I-2Q)\) has no negative eigenvalues. Under these conditions, the tangent vector \(\overrightarrow{PQ}\) and the distance can be expressed in terms of \(\Omega\) as shown.
\end{remark}

The structure of this paper is as follows. In section 2, we establish the Semi-Parallelogram Law and the Law of Cosines for  Grassmannians.  Additionally, we prove that  for a geodesic triangle with vertices $A$, $B$, and $C$, the inequality $d(A \sharp B, A \sharp C) \geq \frac{1}{2} d(A, B)$ holds.
In section 3, we extend this result, showing that  for any $t \in [0, 1]$, the inequality $d(A \sharp_t B, A \sharp_t C) \geq t d(A, B)$ holds.
Examples are given to illustrate that these are local properties, meaning they describe behaviors within a metric ball $\B$ of radius less than $\rho_\F$.

\section{Distance inequalities on geometric means}

If $t\in[0,1]$ is interpreted as time, then $\gamma(t)$ is the velocity function. The proposition below asserts that $\gamma$ has constant speed, or equivalently, 
the function $$[0,1]\to [0,d(P,Q)], \quad t\mapsto d(P, P\sharp_tQ)$$  is linear.
\begin{proposition}\label{linear}
Let $P, Q\in  \Gr_{n,k}(\F)$ lie within a metric ball $\B$ of radius less than $\rho_\F$. Then for any $t\in [0,1]$,
\begin{equation}\label{dt=td}
 d(P, P\sharp_tQ)=t\, d(P,Q).
\end{equation}
\end{proposition}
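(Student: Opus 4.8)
The plan is to compute $d(P, P\sharp_t Q)$ directly from the explicit formulas \eqref{geodesic}, \eqref{Omega}, and \eqref{dist}, reducing the whole identity to a single relation for the matrix $\Omega$. Write $\gamma(t)=P\sharp_t Q=e^{t\Omega}Pe^{-t\Omega}$. Since conjugation by $e^{t\Omega}$ fixes $I$, the first observation is
\begin{equation*}
I-2\gamma(t)=e^{t\Omega}(I-2P)e^{-t\Omega},
\end{equation*}
so that, by \eqref{dist}, the quantity $d(P,\gamma(t))$ is governed entirely by the product $(I-2\gamma(t))(I-2P)=e^{t\Omega}(I-2P)e^{-t\Omega}(I-2P)$. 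The proof then hinges on collapsing this product into a single matrix exponential.

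The key algebraic step would use the anticommutation relation \eqref{omega}, namely $(I-2P)\Omega=-\Omega(I-2P)$, together with the reflection identity $(I-2P)^2=I$ from \eqref{reflection}. Multiplying the first on the right by $I-2P$ gives $(I-2P)\Omega(I-2P)=-\Omega$, and inserting $(I-2P)^2=I$ between successive factors in the power series shows $(I-2P)\Omega^k(I-2P)=(-\Omega)^k$, whence
\begin{equation*}
(I-2P)\,e^{-t\Omega}\,(I-2P)=e^{t\Omega}.
\end{equation*}
Substituting this back and grouping the last three factors produces the clean identity $(I-2\gamma(t))(I-2P)=e^{2t\Omega}$. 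Consequently $\tfrac12\log[(I-2\gamma(t))(I-2P)]=t\Omega$, which is exactly the matrix playing the role of $\Omega$ in \eqref{Omega} for the pair $(P,\gamma(t))$; feeding this into the distance formula \eqref{dist} yields $d(P,\gamma(t))=\|t\Omega\|_F=t\|\Omega\|_F=t\,d(P,Q)$, which is \eqref{dt=td}.

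The step I expect to be the main obstacle is justifying that the principal logarithm genuinely returns $t\Omega$, i.e.\ that $\tfrac12\log[(I-2\gamma(t))(I-2P)]=t\Omega$ on the nose rather than on some other branch, and that $e^{2t\Omega}$ has no negative eigenvalues so that \eqref{dist} is applicable at the intermediate point $\gamma(t)$. Since $\Omega$ is skew-Hermitian, the eigenvalues of $2\Omega=\log[(I-2Q)(I-2P)]$ are purely imaginary, and the hypothesis that $P,Q$ lie in a metric ball of radius less than $\rho_\F$ is what must be invoked to force their imaginary parts into $(-\pi,\pi)$; then for every $t\in[0,1]$ the eigenvalues of $2t\Omega$ remain in the strip on which the principal logarithm inverts the matrix exponential and $e^{2t\Omega}$ avoids negative eigenvalues. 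Conceptually this is just the statement that the minimizing geodesic $\gamma$ stays inside the strongly convex ball $\B$, so that $\Exp_P^{-1}\gamma(t)=t\,\overrightarrow{PQ}$ is well defined; the abstract computation $d(P,\gamma(t))=\langle \Exp_P^{-1}\gamma(t),\Exp_P^{-1}\gamma(t)\rangle_P^{1/2}=\langle t\,\overrightarrow{PQ},t\,\overrightarrow{PQ}\rangle_P^{1/2}=t\,\langle \overrightarrow{PQ},\overrightarrow{PQ}\rangle_P^{1/2}=t\,d(P,Q)$ from \eqref{dist-abs} then serves as a clean cross-check of the explicit matrix argument.
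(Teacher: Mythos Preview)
Your proposal is correct. The paper's own proof, however, takes a different and much shorter route: it is precisely the abstract computation you tuck away at the end as a ``cross-check.'' The paper simply writes $\gamma(t)=\Exp_P(t\,\Exp_P^{-1}Q)$ from \eqref{geo-abs} and then applies \eqref{dist-abs} directly:
\[
d(P,\gamma(t))=\langle \Exp_P^{-1}\gamma(t),\Exp_P^{-1}\gamma(t)\rangle_P^{1/2}
=\langle t\,\Exp_P^{-1}Q,\, t\,\Exp_P^{-1}Q\rangle_P^{1/2}=t\,d(P,Q).
\]
No matrix manipulation, no logarithm branch issues; the convexity of $\B$ and the fact that $\Exp_P$ is a diffeomorphism on the relevant neighborhood are invoked implicitly through \eqref{geo-abs} and \eqref{dist-abs}.

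Your primary argument is a genuinely different, more explicit approach: you compute $(I-2\gamma(t))(I-2P)=e^{2t\Omega}$ directly from the anticommutation relation \eqref{omega} and the reflection identity \eqref{reflection}, and then read off the distance from \eqref{dist}. This buys you a concrete matrix identity that makes the constant-speed property visible at the level of the explicit model, at the cost of having to track the principal-logarithm branch (which you handle correctly by noting that the eigenvalues of $2t\Omega$ stay in the strip for $t\in[0,1]$). The paper's abstract argument, by contrast, sidesteps all of that and works verbatim on any Riemannian manifold inside a strongly convex ball, which is in keeping with the paper's later use of the proposition in curvature-comparison arguments.
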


\begin{proof} 
By \eqref{geo-abs}, the unique geodesic  joining $P$ and $Q$ is given by $\gamma(t) = \Exp_P(t\, \Exp^{-1}_PQ)$, $t\in[0,1]$. Using \eqref{dist-abs}, we have
\begin{eqnarray*}
d(P, P\sharp_tQ) &=& d(P, \gamma(t)) \\
&=& 
\langle \Exp_P^{-1} ( \Exp_P(t\, \exp^{-1}_PQ)), \Exp_P^{-1}( \Exp_P(t\, \Exp^{-1}_PQ))\rangle_P^{1/2} \\
&=& 
\langle t\, \Exp^{-1} _PQ, t\Exp^{-1}_PQ\rangle_P^{1/2} \\
&=& t\langle  \Exp^{-1}_PQ,  \Exp^{-1}_PQ\rangle_P^{1/2} \\
&=& t d(P,Q).
\end{eqnarray*}
\end{proof}
Proposition \ref{linear} has some nice implications, namely,
\begin{equation}\label{1-t}
P\sharp _t Q = Q\sharp_{1-t}P, \quad t\in (0,1),
\end{equation}
\begin{equation}\label{st}
P \sharp_s (P\sharp_{t} Q) = P\sharp _{st}Q, \quad  s, t\in (0,1),
\end{equation}
\begin{equation}\label{s+t}
(P \sharp_s Q)\sharp_{\frac t{1-s}} Q = P\sharp _{s+t}Q, \quad  s, t, s+t\in (0,1).
\end{equation}
To derive \eqref{1-t}, Proposition \ref{linear} implies 
\[
d(Q, Q\sharp_{1-t}P) = (1-t)d(Q,P) = (1-t)d(P,Q)
\] and
\[
d(P, Q\sharp_{1-t} P) + d(Q, Q\sharp_{1-t}P) = d(P,Q),
\]
 and thus $d(P, Q\sharp_{1-t}P) = td(P,Q)=d(P,P\sharp_tQ)$. This implies $P\sharp _t Q = Q\sharp_{1-t}P$ since $\gamma(t$) is the unique geodesic joining $P$ and $Q$.
To prove \eqref{st},  Proposition \ref{linear} implies
\[
d(P,  P\sharp_s (P\sharp_{t} Q))= s\, d(P, P\sharp_{t} Q)=st\, d(P,Q) = d(P, P\sharp_{st}Q).
\]
We have $P \sharp_s (P\sharp_{t} Q) = P\sharp _{st}Q$ since $\gamma(t)$ is the unique geodesic joining $P$ and $Q$. Similarly, \eqref{s+t} can be derived.

\begin{remark} 
We would like to point out that  (\ref{dt=td}) holds {\it globally} for any two points in $\P_n$  \cite[Theorem 6.1.6] {RB07}, 
and more generally for symmetric spaces of noncompact type.
The Semi-Parallelogram Law for $\P_n$ \cite[Theorem 6.1.9] {RB07} reflects the hyperbolic geometry of $\P_n$ which is a symmetric space of noncompact type. 
For more details on the equivalence between the Semi-Parallelogram Law and the nonpositive curvature of the underlying Riemannian manifold, see \cite [p.309-311, Theorem 3.5 in Chapter XI, and Chapter XII] {L99}.
\end{remark}

We will establish the {\it local} Semi-Parallelogram Law for $\Gr_{n,k}(\F)$, which reflects the elliptic geometry of the Grassmannians, characterized by its nonnegative sectional curvature \cite {W68a}.  This geometry, primarily driven by the manifold's curvature, differs from Euclidean geometry, where straight lines are geodesics, the Parallelogram  Law holds, and the sum of the interior angles of a triangle is $\pi$. 

\begin{theorem}\label{laws} Let $A, B, C\in  \Gr_{n,k}(\F)$ be  three points  within a metric ball $\B$ of radius less than $\rho_\F$.
\begin{enumerate}
\item (Semi-Parallelogram Law)
\label{SemiPa}
 If $M$ is  the midpoint of the geodesic from $A$ to $B$, then

\begin{equation}\label{semipara}
d^2(M,C)\geq \frac{d^2(A,C)+d^2(B,C)}{2}-\frac{d^2(A,B)}{4}.
\end{equation}

\item (Law of Cosines)  Suppose that $A, B, C$ are distinct.  
Let $\alpha, \beta, \gamma$ be
the angles 
at $A, B, C$, respectively.
Then
\begin{equation}\label{cosine}
d^2(B,A)\leq d^2(C,A)+d^2(C,B)-2d(C,A)d(C,B)\cos \gamma.
\end{equation}
and 
\begin{equation}\label{cosine2}
d(B,A)\geq d(C,A)\cos \alpha+d(C,B)\cos \beta.
\end{equation}
Moreover, 

\begin{equation}\label{cosine3}
\alpha+ \beta+ \gamma \ge \pi.
\end{equation}
\end{enumerate}
\vspace{-1cm}
\begin{figure}[H]
\begin{center}
\includegraphics[width=0.35\textwidth, angle=0]{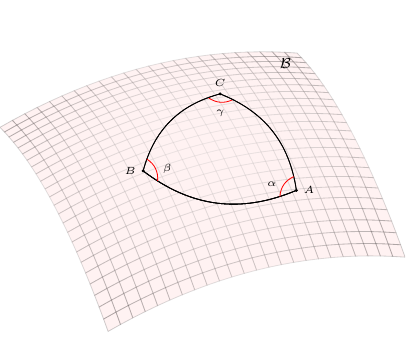}
\end{center}
\caption{Law of Cosines}
\end{figure}
\end{theorem}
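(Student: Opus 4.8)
The plan is to derive all four inequalities from Toponogov's comparison theorem, exploiting the fact that each $\Gr_{n,k}(\F)$ is a complete (indeed compact) Riemannian manifold whose sectional curvatures are all nonnegative. Since $A,B,C$ lie in a metric ball $\B$ of radius less than $\rho_\F$, the ball is strongly convex, so the three sides $AB$, $BC$, $CA$ are the unique minimizing geodesics joining the vertices and they stay inside $\B$; the same applies to the median joining $C$ to the midpoint $M$ of $AB$, since $M\in\B$ by convexity. This is precisely the setting (complete manifold, lower curvature bound $0$, minimizing sides) in which Toponogov's theorem compares $\triangle ABC$ with its Euclidean comparison triangle $\triangle\bar A\bar B\bar C\subset\R^2$, namely the triangle with matching side lengths $d(\bar A,\bar B)=d(A,B)$, $d(\bar B,\bar C)=d(B,C)$, $d(\bar C,\bar A)=d(C,A)$, which exists and is unique up to isometry because the three distances obey the triangle inequality. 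For the lower bound $0$ there is no restriction on the perimeter, so the theorem applies directly. I will use two standard forms of its conclusion: the distance comparison for a point on a side, and the comparison of interior angles.

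For the Semi-Parallelogram Law I would apply the point-on-side version with apex $C$ and the point $M$ on side $AB$. Let $\bar M$ be the point of $\bar A\bar B$ with $d(\bar A,\bar M)=d(A,M)$; because $M$ is the midpoint, $\bar M$ is the midpoint of $\bar A\bar B$. Toponogov's distance comparison in nonnegative curvature gives $d(C,M)\ge d(\bar C,\bar M)$. In the plane the median obeys Apollonius's identity
\[
d^2(\bar C,\bar M)=\frac{d^2(\bar A,\bar C)+d^2(\bar B,\bar C)}{2}-\frac{d^2(\bar A,\bar B)}{4}.
\]
Substituting the matching side lengths and combining with $d(C,M)\ge d(\bar C,\bar M)$ yields \eqref{semipara}.

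For the Law of Cosines I would use the angle comparison, which in nonnegative curvature asserts that the interior angles of $\triangle ABC$ dominate those of the comparison triangle: $\alpha\ge\bar\alpha$, $\beta\ge\bar\beta$, $\gamma\ge\bar\gamma$. Since cosine is decreasing on $[0,\pi]$, we have $\cos\gamma\le\cos\bar\gamma$, and inserting this into the Euclidean Law of Cosines $d^2(\bar A,\bar B)=d^2(\bar A,\bar C)+d^2(\bar B,\bar C)-2\,d(\bar A,\bar C)\,d(\bar B,\bar C)\cos\bar\gamma$ gives \eqref{cosine}. For \eqref{cosine2} I would begin from the Euclidean projection formula $d(\bar A,\bar B)=d(\bar A,\bar C)\cos\bar\alpha+d(\bar B,\bar C)\cos\bar\beta$ and use $\cos\alpha\le\cos\bar\alpha$, $\cos\beta\le\cos\bar\beta$ to obtain the stated lower bound. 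Finally \eqref{cosine3} is immediate from $\alpha+\beta+\gamma\ge\bar\alpha+\bar\beta+\bar\gamma=\pi$, the Euclidean angle sum.

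The main obstacle is not the algebra---once the comparison triangle is fixed, each inequality is a one-line consequence of a classical Euclidean identity---but the clean justification that Toponogov's theorem applies. Two points must be pinned down: the global sign of the curvature (every sectional curvature of $\Gr_{n,k}(\F)$ is nonnegative, which is what identifies the comparison space as $\R^2$ rather than a sphere or a hyperbolic plane), and the minimality of the sides and of the median, which is exactly where the hypothesis that $A,B,C$ lie in a strongly convex ball of radius less than $\rho_\F$ is indispensable; outside this regime the sides need not be minimizing, multiple geodesics appear, and the comparison can break down. I would therefore devote most of the write-up to recording these hypotheses and to citing the precise form of Toponogov's theorem, after which the four displays follow at once.
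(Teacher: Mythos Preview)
Your argument is correct: Toponogov's comparison theorem applies because $\Gr_{n,k}(\F)$ is complete with sectional curvature bounded below by $0$, and strong convexity of $\B$ makes all the relevant geodesics minimizing. Each of the four displayed inequalities then follows from a standard Euclidean identity (Apollonius, the Law of Cosines, the projection formula, the angle sum) together with the distance or angle comparison.

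The paper takes a more hands-on route that does not cite Toponogov directly. For \eqref{semipara} it works in the tangent space at the midpoint $M$: writing $A=\Exp_M v$, $B=\Exp_M(-v)$, $C=\Exp_M w$, it applies the Euclidean parallelogram law in $T_M\B$ and uses that $\Exp_M$ is metric non-increasing (a Rauch-type consequence of $\kappa\ge 0$) while preserving radial distances. For \eqref{cosine} it does the analogous computation in $T_C\B$. Inequality \eqref{cosine2} is then obtained purely algebraically by writing down the three cyclic versions of \eqref{cosine} and adding two of them, rather than via the projection formula you use. Only for \eqref{cosine3} does the paper invoke a comparison triangle, exactly as you do. Your approach is cleaner and more uniform, packaging all four inequalities as immediate corollaries of a single cited theorem; the paper's approach is more self-contained, essentially reproving the hinge version of the comparison in situ via the exponential map. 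Both rest on the same curvature hypothesis and convexity of $\B$, so neither gains generality over the other.
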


\begin{proof} 
The sectional curvature at each point $P\in \B\subset \Gr_{n,k}(\F)$ is nonnegative and $\B$ is convex. We note that the terminology used in \cite[p. 220]{L99} refers to this property simply as convex, rather than strongly convex, and we reserve the term convex for function convexity in Examples \ref{3.5} and \ref{3.6}.  As shown in
 \cite [p.308]{L99},  the exponential map at $P$, $\Exp_P: V_P\to \B$, is an isomorphism for a neighborhood $V_P$ of $0$ of the tangent space $T_P\B$, making  $\Exp_P$ metric non-increasing from $V_P$ to $\B$. Specifically, this means:
\begin{equation}\label{non-inc}
\|(d\, \Exp_P)_v(w)\|\le \|w\|\quad \mbox{for all}\ v\in V_P,\ w\in T_vV_P,
\end{equation}
where $d\, \Exp_P$ denotes the differential of the exponential map $\Exp_P$.
Note that $\Exp_P$ is
 metric preserving along rays from the origin \cite [p.305]{L99}:
 \[
 \|(d\Exp_P)v(w)\|=\|w\|,\quad \mbox{for all}\ v\in V_P, w\in T_vV_P\ \mbox{and}\ w\ \mbox{is a scalar multiple of}\ v.
 \]
   
 Here $\|\cdot\|$ is the norm induced by the Riemannian metric $\langle \cdot, \cdot \rangle_P$.
By an argument similar to that in Remark 2.2 and Lemma 3.1 of \cite{Conde2009} (for the case of the manifold with nonpositive curvature), we can analogously conclude the following in Grassmannians.
 If $\sigma:[0,1]\to \B$ is a smooth curve,
then 
\[L(\Exp^{-1}_P \circ \sigma)\geq L(\sigma),
\]
where $L(\cdot)$ is the arc length of the curve \cite [p.189]{L99}.
Therefore,
\[\|x-y\|\geq d(\Exp_P x , \Exp_P y) \]
for any $x,y\in T_vV_P$.

(1) As $M$ is the midpoint of the geodesic from $A$ to $B$, there is  $v_1 \in T_M \B$ such that 
$$
A = \Exp_M v_1, \quad B=\Exp_M v_2, \quad M = \Exp_M 0,
$$
where $v_2=-v_1$.
For $C = \Exp_M v$, where $v \in T_M \B$, the Parallelogram Law for  the Euclidean space $T_M\B$ gives
$$
\delta^2(v_1, v_2)+4\delta^2 (v,0) = 2\delta^2 (v,v_1)+ 2\delta^2 (v,v_2),
$$
where $\delta(x,y) = \|x-y\|:= \langle x-y,x-y\rangle_M^{1/2}$ for $x, y\in T_M\B$. Under the exponential map $\Exp_M$, the distances on the left side are preserved, but the distances on the right side are not increased. So we have
$$
d^2(A, B)+4d^2(C,M) \geq 2d^2(C, A)+ 2d^2(C, B),
$$
which is the Semi-Parallelogram Law for  $\B\subset \Gr_{n,k}(\F)$.

 (2) Similarly, there are $v_1, v_2\in T_C \B$  such that 
\[
A = \Exp_Cv_1, \quad B=\Exp_Cv_2, \quad C=\Exp_C 0.
\]
The Law of Cosines for the Euclidean space $T_C\B$ states:
\[
\delta^2(v_2,v_1) =  \delta^2(0,v_1)+\delta^2(0,v_2)-2\delta(0,v_1)d(0,v_2)\cos \gamma.
\]
 Under the exponential map $\Exp_C$, the distances on the right side are preserved but the distance on the left side is not increased. So
\[
d^2(B,A)\leq d^2(C,A)+d^2(C,B)-2d(C,A)d(C,B)\cos \gamma
\]
i.e., we proves \eqref{cosine}.
By symmetry, we also have
\[
d^2(C,B) \leq d^2(C,A)+d^2(B,A) -2d(C,A) d(B,A) \cos \alpha,
\]
and
\[ 
d^2(C,A)  \leq d^2(C,B)+d^2(B,A) - 2d(B,A)d(C,B)\cos \beta . 
\]
Since $A$ and $B$ are distinct, $d(B,A)\not=0$ and thus we have
\[
d(C,A)\cos \alpha \leq \frac{d^2(C,A)+d^2(B,A)-d^2(C,B)}{2d(B,A)}
\]
and 
\[d(C,B)\cos \beta \leq \frac{d^2(C,B)+d^2(B,A)-d^2(C,A)}{2d(B,A)}. \]
So we conclude
\[ 
d(C,A)\cos \alpha+d(C,B)\cos \beta\leq d(B,A),\]
i.e., \eqref{cosine2} is established.

%
Finally, we can find a plane triangle with sides of lengths $d(A, B)$, $d(B,C)$, $d(A,C)$ and corresponding angles $\alpha', \beta', \gamma'$. 
The  Law of Cosines implies that $\alpha'\le \alpha$, $\beta'\le \beta$ and $\gamma'\le \gamma$, so we have $\alpha+ \beta+ \gamma \ge \pi$, which is \eqref{cosine3}.
 \end{proof}

In  (\ref{SemiPa}) of Theorem \ref{laws}, if  $D$ in $\B$ exists such that the $M$ is  the midpoint of the geodesic from $C$ to $D$.
Then we have 
\[ d^2(M,C)\geq \frac{d^2(A,C)+d^2(B,C)}{2}-\frac{d^2(A,B)}{4},\]
 
\[d^2(M,D)\geq \frac{d^2(A,D)+d^2(B,D)}{2}-\frac{d^2(A,B)}{4},\]
and \[ d^2(M,C)=d^2(M,D)=\frac{d^2(C,D)}{4}.\]
Hence we obtain
\[ 
\begin{split}
d^2(M,C) + d^2(M,D)&\geq \frac{d^2(A,C)+d^2(B,C)+d^2(A,D)+d^2(B,D)}{2}-\frac{d^2(A,B)}{2}\\
\frac{d^2(C,D)}{2}&\geq   \frac{d^2(A,C)+d^2(B,C)+d^2(A,D)+d^2(B,D)}{2}-\frac{d^2(A,B)}{2}.\\
\end{split}
\]
Finally, we have 
\begin{equation}
d^2(C,D)+d^2(A,B) \geq d^2(A,C)+d^2(B,C)+d^2(A,D)+d^2(B,D).
\end{equation}
\vspace{-1.5cm}
\begin{figure}[H]
\begin{center}
\includegraphics[width=0.35\textwidth, angle=0]{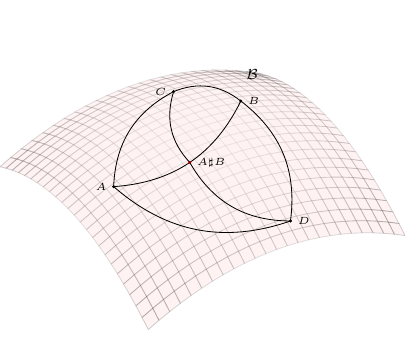}
\end{center}
\caption{$
d^2(C,D)+d^2(A,B) \geq d^2(A,C)+d^2(B,C)+d^2(A,D)+d^2(B,D)$}
\end{figure}

\begin{remark} 
Theorem \ref{laws} (1) asserts that  for any $A, B\in \B$, there is  $X\in\B$ such that for all $C\in \B$, 
\begin{equation}\label{X}
d^2(X,C)\geq \frac{d^2(A,C)+d^2(B,C)}{2}-\frac{d^2(A,B)}{4},
\end{equation}
and the midpoint $X=M$ between $A$ and $B$ satisfies this inequality.
Moreover,
$X$  in \eqref{X} is unique. Let $M\in \B$ is the midpoint of $A$ and $B$. Then
$$d^2(M,C)\geq \frac{d^2(A,C)+d^2(B,C)}{2}-\frac{d^2(A,B)}{4}$$ for any $C\in \B$.
Suppose that there is a  $X\in \B$ with $X\neq M$ such that 
\[
d^2(X,C)\geq \frac{d^2(A,C)+d^2(B,C)}{2}-\frac{d^2(A,B)}{4}
\] for any $C\in \B$. Define the sets
\[
S_1=\{C\in \B: d(X,C)\geq d(M,C)\}, \quad S_2=\{C\in \B: d(X,C)< d(M,C)\}.
\]
Then $S_1\cap S_2=\emptyset$ and $S_1\cup S_2=\B$.
Since $X\in S_2$, we have
\[d^2(M,X)> 0 = d^2(X,X)\geq \frac{d^2(A,X)+d^2(B,X)}{2}-\frac{d^2(A,B)}{4}.\]
Using the triangle inequality $d(A,B)<d(A,X)+d(B,X)$, we obtain
\[
\begin{split}0&\geq \frac{d^2(A,X)+d^2(B,X)}{2}-\frac{d^2(A,B)}{4}\\
&\geq \frac{d^2(A,X)+d^2(B,X)}{2}-\frac{(d(A,X)+d(B,X))^2}{4}\\
&\geq \frac{(d(A,X)-d(B,X))^2}{4}.
\end{split}
\]
Thus, all inequalities must hold as equalities. From the last equality, we deduce $d(A,X)=d(B,X)$, and by the first equality, these distances are equal to $\frac 12 d(A,B)$.
Since there is only one shortest path between $A$ and $B$ in $\B$, the geodesics  between $A$ and $B$ via $M$ must be identical to the geodesic via $X$. Therefore, $X=M$, 
which contradicts our assumption that $X\not=M$.
Hence we conclude that $X$  in \eqref{X} is unique, and is the midpoint of $A$ and $B$.
\end{remark}

\begin{remark} \label{remark} 
The Semi-Parallelogram Law \eqref{semipara}  fails to hold if $A,B,$ and $C$ lie outside the ball $\B$, even if the pairwise products of 
$(I-2A)$, $(I-2B)$, and $(I-2C)$ have no negative eigenvalues. This is illustrated by the following numerical example.
\end{remark}

\begin{example}
Consider the following three matrices in $\Gr_{2,2}(\C)$:
\[A=\begin{pmatrix} 0.5709 + 0.0000i	&-0.1226 -0.4795i\\
-0.1226 + 0.4795i	 & 0.4291 + 0.0000i \end{pmatrix} \]
\[ B=\begin{pmatrix}  0.1673 + 0.0000i	 &0.3063 - 0.2132i\\
0.3063 + 0.2132i	&0.8327 + 0.0000i \end{pmatrix}\]
\[C=\begin{pmatrix}   0.9638 + 0.0000i	&0.0316 + 0.1842i\\
0.0316 - 0.1842i	&0.0362 + 0.0000i\end{pmatrix}.\]
The products $(I-2A)(I-2B)$, $(I-2A)(I-2C)$, $(I-2B)(I-2C)$ have no
negative eigenvalues.
By \eqref{geodesic1}, the midpoint of $A$ and $B$ is
\[M=\begin{pmatrix} 0.7798 - 0.0000i	&-0.3075 - 0.2778i\\
-0.3075 + 0.2778i	& 0.2202 + 0.0000i \end{pmatrix}.  \]
From \eqref{dist},  the distances are $d(A,C)= 0.6401$,
$d(B,C)=0.8476 >\frac 14\pi$, $d(A,B)=0.4970$ and $d(M,C)=0.4567$,
and $d^2(M,C)- [\frac{d^2(A,C)+d^2(B,C)}{2}-\frac{d^2(A,B)}{4}]=-0.2937$.
Then $$d^2(M,C)\leq \frac{d^2(A,C)+d^2(B,C)}{2}-\frac{d^2(A,B)}{4}.
 $$
 i.e., \eqref{semipara} fails to hold. This example also implies that $ \frac 12 {r_\C} \le \rho_\C<\frac 14 \pi$ for $\Gr_{2,2}(\C)$.
 \end{example}

By applying the Semi-Parallelogram Law, we can deduce the following geometric property for a geodesic triangle.

\begin{proposition}\label{half}
Let $A, B, C\in  \Gr_{n,k}(\F)$ lie within a metric ball $\B$ of radius less than $\rho_\F$. Then
\begin{equation} \label{midineq}
d(A\sharp B,A\sharp C)\geq \frac{d(B,C)}{2}.
\end{equation}
\vspace{-1cm}
\begin{figure}[H]
\begin{center}
\includegraphics[width=0.40\textwidth, angle=0]{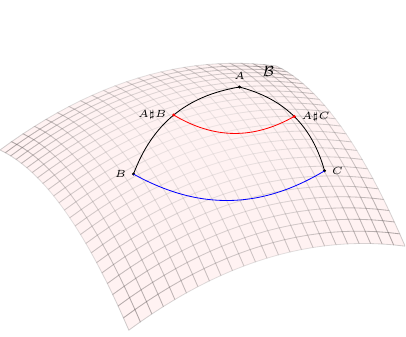}
\end{center}
\caption{$d(A\sharp B,A\sharp C)\geq \frac 12 d(B,C)$}
\end{figure}
\end{proposition}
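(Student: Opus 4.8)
The plan is to derive \eqref{midineq} from two applications of the Semi-Parallelogram Law (Theorem~\ref{laws}(1)) combined with the constant-speed property of Proposition~\ref{linear}. Write $M_B := A\sharp B$ and $M_C := A\sharp C$ for the two midpoints. Since $\B$ is convex, the minimizing geodesics from $A$ to $B$ and from $A$ to $C$ lie entirely in $\B$, so $M_B, M_C\in\B$, and every triple of points used below satisfies the hypotheses of Theorem~\ref{laws}.

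Before committing to this route it is worth recording why the naive approach fails, since this explains the choice of method. One is tempted to pull everything back to the tangent space $T_A\B$ via $\Exp_A$, writing $B=\Exp_A u$ and $C=\Exp_A v$, so that $M_B=\Exp_A(\tfrac12 u)$ and $M_C=\Exp_A(\tfrac12 v)$ by Proposition~\ref{linear}. However, in nonnegative curvature $\Exp_A$ is metric \emph{non-increasing}, which yields $d(M_B,M_C)\le \tfrac12\|u-v\|$, an upper bound in the wrong direction. Thus the Euclidean comparison cannot be made at $A$; it must instead be applied at the midpoints themselves, where the Semi-Parallelogram Law supplies the needed lower bound.

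Concretely, I would first apply the Semi-Parallelogram Law at $M_B$, the midpoint of the geodesic from $A$ to $B$, taking $M_C$ as the third point:
\[
d^2(M_B,M_C)\ \ge\ \frac{d^2(A,M_C)+d^2(B,M_C)}{2}-\frac{d^2(A,B)}{4}.
\]
By Proposition~\ref{linear}, $d(A,M_C)=d(A,A\sharp C)=\tfrac12 d(A,C)$, so $d^2(A,M_C)=\tfrac14 d^2(A,C)$ is known exactly. To control the remaining term $d(B,M_C)$, I would apply the Semi-Parallelogram Law a second time, now at $M_C$, the midpoint of $A$ and $C$, with third point $B$:
\[
d^2(B,M_C)\ \ge\ \frac{d^2(A,B)+d^2(C,B)}{2}-\frac{d^2(A,C)}{4}.
\]

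Substituting both facts into the first inequality, the two occurrences of $\tfrac18 d^2(A,C)$ cancel and the two occurrences of $\tfrac14 d^2(A,B)$ cancel, leaving precisely $d^2(M_B,M_C)\ge \tfrac14 d^2(B,C)$, which is \eqref{midineq} after taking square roots. The calculation itself is routine; the only genuine obstacle is organizational, namely choosing to expand about the two midpoints $M_B$ and $M_C$ (rather than about $A$ or $C$) so that the unwanted distances $d(A,B)$ and $d(A,C)$ cancel exactly. I would also make explicit at the outset that every vertex and midpoint invoked lies in $\B$, which follows from convexity, so that Theorem~\ref{laws} is legitimately applicable at each of the two steps.
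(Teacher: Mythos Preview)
Your proof is correct and follows essentially the same approach as the paper: two applications of the Semi-Parallelogram Law at the midpoints $A\sharp B$ and $A\sharp C$, combined with the constant-speed identity $d(A,A\sharp C)=\tfrac12 d(A,C)$, so that the cross terms in $d(A,B)$ and $d(A,C)$ cancel. The only cosmetic difference is the choice of third point in each application (you use $M_C$ then $B$, the paper uses $C$ then $M_1$), but the structure and the cancellation are identical.
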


\begin{proof} 
Let $M_1=A\sharp B$ and $M_2=A\sharp C$.
Applying the Semi-Parallelogram Law (\ref{semipara}) to the geodesic triangle $\Delta(A,B,C)$ with vertices $A, B, C$, and $M_1$ being the midpoint of $A$ and $B$, we obtain
\begin{equation}\label{first}
d^2(M_1,C)\geq \frac{d^2(A,C)+d^2(B,C)}{2}-\frac{d^2(A,B)}{4}.
\end{equation}
Next, applying (\ref{semipara}) again to the geodesic triangle $\Delta(A,C, M_1)$,  with $M_2$  the midpoint of $A$ and $C$, gives\begin{equation}\label{second}
d^2(M_1,M_2)\geq \frac{d^2(M_1,C)+d^2(M_1,A)}{2}-\frac{d^2(A,C)}{4}.
\end{equation}
Substituting (\ref{first}) into (\ref{second}), we get
\[d^2(M_1,M_2)\geq \frac{d^2(A,C)+d^2(B,C)}{4}-\frac{d^2(A,B)}{8}+\frac{d^2(M_1,A)}{2}-\frac{d^2(A,C)}{4}.\]
Since $d(M_1,A)=\frac 12 d(A,B)$, this simplifies to
$d^2(M_1,M_2)\geq\frac 14{d^2(B,C)}.$
Thus we conclude that $d(M_1,M_2)\geq \frac 12 d(B,C)$, which is \eqref{midineq}.
\end{proof}

\section{Generalized Distance Inequality $d(A\sharp_t B, A\sharp_t C) \geq   t d(B,C)$ for $t\in [0,1].$}
It follows from \eqref{midineq} that $d(A\sharp_t B, A\sharp_t C) \geq  t d(B,C)$ for any  $t= \frac 1{2^k}$, where $k$ is a positive integer.
It is natural to ask whether \eqref{midineq}  can be extended   to $t\in[0,1]$, that is,
 \[
d(A\sharp_t B, A\sharp_t C) \geq   t d(B,C), \quad t\in [0,1].
\]
The answer is affirmative. To prove it, we first establish the following  lemma.
 
\begin{lemma}\label{forp}
Let $A, B, C\in  \Gr_{n,k}(\F)$ lie within a metric ball $\B$ of radius less than $\rho_\F$. We have
\begin{equation}\label{anyp}
 \frac{p-1}{p} d(B,C) \leq d(A\sharp_{\frac{p-1}{p}} B, A\sharp_{\frac{p-1}{p}} C) , \quad p =1, 2, 3, \dots
\end{equation}
 
\end{lemma}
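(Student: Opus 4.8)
The plan is to reduce the estimate to a single \emph{interpolated} (chord) version of the Semi-Parallelogram Law along a geodesic and then apply that version twice. Concretely, I would first establish that for any geodesic segment $\gamma\colon[0,1]\to\B$ and any point $R\in\B$ one has
\[
d^2(\gamma(t),R)\ge (1-t)\,d^2(\gamma(0),R)+t\,d^2(\gamma(1),R)-t(1-t)\,d^2(\gamma(0),\gamma(1))
\]
at every rational parameter $t=k/p$ with $0\le k\le p$. This is the secant form of \eqref{semipara}, which only treats the midpoint $t=\tfrac12$; the goal is to interpolate \eqref{semipara} to the parameter $t=\tfrac{p-1}{p}$ that appears in \eqref{anyp}.

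To obtain this interpolated inequality I would subdivide $\gamma$ into $p$ equal pieces, writing $X_j:=\gamma(j/p)$ for $j=0,\dots,p$. By Proposition~\ref{linear} and the uniqueness of the minimizing geodesic in the strongly convex ball $\B$, each interior point $X_j$ is the midpoint of $X_{j-1}$ and $X_{j+1}$, and $d(X_{j-1},X_{j+1})=\tfrac{2}{p}\,d(\gamma(0),\gamma(1))$. Setting $D_j:=d^2(X_j,R)$ and applying \eqref{semipara} at each interior $X_j$ yields the second–difference bound $D_{j+1}-2D_j+D_{j-1}\le \tfrac{2}{p^2}d^2(\gamma(0),\gamma(1))$. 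Hence the sequence $D_j-\tfrac{j^2}{p^2}d^2(\gamma(0),\gamma(1))$ is discretely concave, so it lies above the chord joining its endpoints at $j=0$ and $j=p$; rearranging that chord estimate at $j=k$ gives exactly the displayed inequality with $t=k/p$. All the points $X_j$ and $R$ stay in $\B$ because $\B$ is convex, so \eqref{semipara} is legitimately applicable at every step.

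With the interpolated inequality available I would fix $t=\tfrac{p-1}{p}$ and apply it twice. First, on the geodesic $s\mapsto A\sharp_s B$ with $R=A\sharp_t C$, using $d(A,A\sharp_t C)=t\,d(A,C)$ from Proposition~\ref{linear}; this bounds $d^2(A\sharp_t B,A\sharp_t C)$ below by a combination of $d^2(B,A\sharp_t C)$, $d^2(A,C)$, and $d^2(A,B)$. Second, on the geodesic $s\mapsto A\sharp_s C$ with $R=B$, to bound $d^2(B,A\sharp_t C)$ below by a combination of $d^2(A,B)$, $d^2(B,C)$, and $d^2(A,C)$. Substituting the second estimate into the first is legitimate since the coefficient of $d^2(B,A\sharp_t C)$ equals $t>0$, and a short computation shows that the $d^2(A,B)$ and $d^2(A,C)$ contributions cancel exactly, leaving $d^2(A\sharp_t B,A\sharp_t C)\ge t^2 d^2(B,C)$, which is \eqref{anyp}. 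The case $p=1$ (where $t=0$) is trivial.

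The main obstacle is the first step: upgrading the midpoint-only law \eqref{semipara} to its chord form at rational parameters. This hinges on the equally spaced midpoint structure that lets \eqref{semipara} be iterated, which in turn rests on the constant-speed parametrization from Proposition~\ref{linear} and on the uniqueness of minimizing geodesics inside $\B$; the passage from the resulting second-difference bound to the chord estimate is then a discrete concavity argument. Once the chord form is in hand, the two applications and the final cancellation are routine.
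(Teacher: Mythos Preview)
Your proposal is correct, and at bottom it uses the same mechanism as the paper: subdivide the geodesics into $p$ equal pieces and iterate the midpoint Semi-Parallelogram Law \eqref{semipara} along the subdivision. The difference is organizational. The paper subdivides both $AB$ and $AC$ simultaneously, writes down the midpoint inequalities \eqref{M1}--\eqref{N1}, and then reaches the conclusion by a chain of substitutions and an explicit induction (its intermediate identity $d^2(M_{p-1},N_i)\ge \tfrac{1}{i+1}d^2(M_{p-1},N_0)+\tfrac{i}{i+1}d^2(M_{p-1},N_{i+1})-\tfrac{i}{p^2}d^2(A,C)$ is precisely your chord inequality on the sub-geodesic from $N_0$ to $N_{i+1}$). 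You instead isolate the chord form $d^2(\gamma(t),R)\ge (1-t)d^2(\gamma(0),R)+t\,d^2(\gamma(1),R)-t(1-t)d^2(\gamma(0),\gamma(1))$ as a stand-alone lemma, prove it once via the discrete-concavity argument on the second differences, and then apply it twice with a clean cancellation. Your route is shorter and more transparent, and it immediately gives the inequality at every rational $t$, not just $t=\tfrac{p-1}{p}$; the paper's route, while longer, keeps everything anchored to the concrete points $M_i,N_j$ and avoids stating an auxiliary lemma. Both rely on exactly the same ingredients: Proposition~\ref{linear}, strong convexity of $\B$, and \eqref{semipara}.
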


\begin{proof}  We will apply the Semi-Parallelogram Law \eqref{semipara}  strategically to various geodesic triangles which are triangles within the geodesic triangle $\Delta(A,B,C)$ with vertices $A, B, C$. This approach is illustrated by the following picture when $p=3$.
\vspace{-1.5cm}
\begin{figure}[H]
\begin{center}
\includegraphics[width=0.5\textwidth, angle=0]{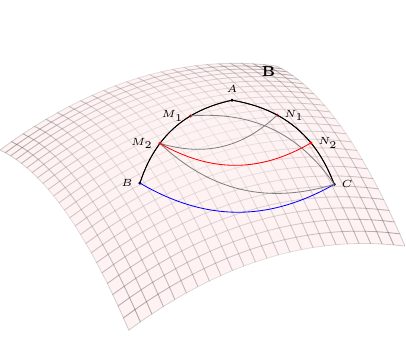}
\end{center}
\caption{Divisions for $p=3$.}
\end{figure}
To proceed, we equally subdivide the paths $AB$ and $AC$ into $p$ segments each:
\[
M_i=A\sharp_{\frac{i}{p}} B,\quad N_i=A\sharp_{\frac{i}{p}} C,\quad i=0,1, \dots ,p,
\] 
where $M_0=A$, $M_p=B$, $N_0=A$, and 
$N_p=C$.

For $j=1,\dots ,p-1$,  applying  \eqref{semipara} to the geodesic triangle $\Delta(N_{p-(j+1)}, N_{p-(j-1)},M_{p-1})$ with  $N_{p-j}$ being the midpoint of $N_{p-(j+1)}$ and $N_{p-(j-1)}$,  we have
 \begin{eqnarray}\label{M1}
 d^2(M_{p-1}, N_{p-j})&\geq& \frac{d^2(M_{p-1}, N_{p-(j+1)})+d^2(M_{p-1}, N_{p-(j-1)})}{2}- \frac{d^2(N_{p-(j+1)},N_{p-(j-1)})}{4} \nonumber\\
 &\geq&  \frac{d^2(M_{p-1}, N_{p-(j+1)})+d^2(M_{p-1}, N_{p-(j-1)})}{2}-\frac{d^2(A,C)}{p^2},
 \end{eqnarray}
because  $d(N_{p-(j+1)},N_{p-(j-1)})= \frac 2p d(A,C)$. Similarly, applying 
 \eqref{semipara} to the geodesic triangle 
$\Delta(M_{p-(j+1)}, M_{p-(j-1)}, N_p)$ with  $M_{p-j}$ being the midpoint of $M_{p-(j+1)}$ and  $M_{p-(j-1)}$,  we obtain

 \begin{equation}\label{N1}
 d^2(M_{p-j}, N_{p})\geq  \frac{d^2(M_{p-(j+1)}, N_{p})+ d^2(M_{p-(j-1)}, N_{p})}{2}-\frac{d^2(A,B)}{p^2},
 \end{equation}
where $j=1,\dots ,p-1$.
 Setting $j=p-1, \dots, 2, 1$ in \eqref{M1} produces the following $p-1$ inequalities:

 
 \begin{subequations}
\begin{align} 
 &d^2(M_{p-1}, N_{1})\geq  \frac{d^2(M_{p-1}, N_{0})+d^2(M_{p-1}, N_{2})}{2}-\frac{d^2(A,C)}{p^2} \label{1}\\
&d^2(M_{p-1}, N_{2})\geq  \frac{d^2(M_{p-1}, N_{1})+d^2(M_{p-1}, N_{3})}{2}-\frac{d^2(A,C)}{p^2} \label{2}\\
&d^2(M_{p-1}, N_{3})\geq  \frac{d^2(M_{p-1}, N_{2})+d^2(M_{p-1}, N_{4})}{2}-\frac{d^2(A,C)}{p^2}\label{3}\\
&\qquad \qquad \vdots  \nonumber\\
&d^2(M_{p-1}, N_{i+1})\geq  \frac{d^2(M_{p-1}, N_{i})+d^2(M_{p-1}, N_{i+2})}{2}-\frac{d^2(A,C)}{p^2}. \label{4}\\
&\qquad \qquad  \vdots  \nonumber\\
&d^2(M_{p-1}, N_{p-1})\geq  \frac{d^2(M_{p-1}, N_{p-2})+d^2(M_{p-1}, N_{p})}{2}-\frac{d^2(A,C)}{p^2}.  \label{MN1}
  \end{align}
\end{subequations} 
Substituting \eqref{1} into \eqref{2} gives
\[
\begin{split}
d^2(M_{p-1}, N_{2}) & \geq  \frac{1}{2}d^2(M_{p-1}, N_{1})+\frac{1}{2}d^2(M_{p-1}, N_{3})-\frac{d^2(A,C)}{p^2}\\
&\geq  \frac{1}{2}\left[\frac{d^2(M_{p-1}, N_{0})+d^2(M_{p-1}, N_{2})}{2}-\frac{d^2(A,C)}{p^2} \right]
+\frac{1}{2}d^2(M_{p-1}, N_{3})-\frac{d^2(A,C)}{p^2}\\
&= \frac{1}{4} d^2(M_{p-1}, N_0)+\frac{1}{4}d^2(M_{p-1}, N_{2})  +\frac{1}{2}d^2(M_{p-1}, N_{3})-\frac{3}{2p^2} d^{2}(A,C),
\end{split}
 \]
which simplifies to
\[ \frac{3}{4} d^2(M_{p-1},N_2)\geq \frac{1}{4} d^2(M_{p-1}, N_0)+\frac{1}{2}d^2(M_{p-1}, N_{3})-\frac{3}{2p^2} d^{2}(A,C). \]
So we get
\begin{equation}\label{12}
d^2(M_{p-1},N_2)\geq \frac{1}{3} d^2(M_{p-1}, N_0)+\frac{2}{3}d^2(M_{p-1}, N_{3})-\frac{2}{ p^2} d^{2}(A,C).
\end{equation}
By substituting \eqref{12} into \eqref{3}, we obtain 
\[
\begin{split}
d^2(M_{p-1}, N_{3}) & \geq  \frac{1}{2}d^2(M_{p-1}, N_{2})+\frac{1}{2}d^2(M_{p-1}, N_{4})-\frac{d^2(A,C)}{p^2}\\
&\geq  \frac{1}{2}\left[\frac{1}{3} d^2(M_{p-1}, N_0)+\frac{2}{3}d^2(M_{p-1}, N_{3})-\frac{2}{ p^2} d^{2}(A,C) \right]+\frac{1}{2}d^2(M_{p-1}, N_{4})-\frac{d^2(A,C)}{p^2}\\
&= \frac{1}{6} d^2(M_{p-1}, N_0)+\frac{1}{3}d^2(M_{p-1}, N_{3})  +\frac{1}{2}d^2(M_{p-1}, N_{4})-\frac{2}{p^2} d^{2}(A,C).
\end{split} \]
This simplifies to
\[ \frac{2}{3} d^2(M_{p-1},N_3)\geq \frac{1}{6} d^2(M_{p-1}, N_0)+\frac{1}{2}d^2(M_{p-1}, N_{4})-\frac{2}{p^2} d^{2}(A,C). \]
Hence we conclude that
\begin{equation}\label{123}
d^2(M_{p-1},N_3)\geq \frac{1}{4} d^2(M_{p-1}, N_0)+\frac{3}{4}d^2(M_{p-1}, N_{4})-\frac{3}{ p^2} d^{2}(A,C).
\end{equation}
In view of  \eqref{12} and \eqref{123}, we now prove following inequalities by induction.
\begin{equation}
d^2(M_{p-1}, N_i) \geq \frac{1}{i+1}d^2(M_{p-1}, N_0) +\frac{i}{i+1}d^2(M_{p-1}, N_{i+1})-\frac{i}{ p^2} d^{2}(A,C),\quad j=1, 2, \dots, p-1.
\end{equation}
Assume that 
\[
d^2(M_{p-1}, N_i) \geq \frac{1}{i+1}d^2(M_{p-1}, N_0) +\frac{i}{i+1}d^2(M_{p-1}, N_{i+1})-\frac{i}{ p^2} d^{2}(A,C).
\]
By \eqref{4}, we have
\[
\begin{split}
d^2(M_{p-1}, N_{i+1})&\geq  \frac{1}{2}d^2(M_{p-1}, N_{i})+\frac{1}{2}d^2(M_{p-1}, N_{i+2})-\frac{d^2(A,C)}{p^2}\\
&\geq \frac{1}{2} \big( \frac{1}{i+1}d^2(M_{p-1}, N_0) +\frac{i}{i+1}d^2(M_{p-1}, N_{i+1})-\frac{i}{ p^2} d^{2}(A,C) \big)\\
&\quad +\frac{1}{2}d^2(M_{p-1}, N_{i+2})-\frac{d^2(A,C)}{p^2}\\
&=\frac{1}{2(i+1)} d^2(M_{p-1}, N_0)+\frac{i}{2(i+1)}d^2(M_{p-1}, N_{i+1})\\
&+\frac{1}{2} d^2(M_{p-1}, N_{i+2})-\frac{i+2}{ 2p^2} d^{2}(A,C),
\end{split}
\]
and then
\[ \frac{i+2}{2(i+1)}d^2(M_{p-1}, N_{i+1})\geq \frac{1}{2(i+1)} d^2(M_{p-1}, N_0) +\frac{1}{2} d^2(M_{p-1}, N_{i+2})-\frac{i+2}{ 2p^2} d^{2}(A,C). \]
So \[  d^2(M_{p-1}, N_{i+1})\geq \frac{1}{i+2} d^2(M_{p-1}, N_0) +\frac{i+1}{i+2} d^2(M_{p-1}, N_{i+2})-\frac{i+1}{  p^2} d^{2}(A,C). \]
Now we can conclude that
\begin{equation}
d^2(M_{p-1}, N_i) \geq \frac{1}{i+1}d^2(M_{p-1}, N_0) +\frac{i}{i+1}d^2(M_{p-1}, N_{i+1})-\frac{i}{ p^2} d^{2}(A,C), \quad i=1,2,...,p-1.
\end{equation}
 When $i=p-2$, we have
\begin{equation}\label{M2}
d^2(M_{p-1}, N_{p-2})\geq \frac{1}{p-1} d^2(M_{p-1}, N_{0})+\frac{p-2}{p-1} d^2(M_{p-1}, N_{p-1})-(p-2)\frac{d^2(A,C)}{p^2}
\end{equation}
Similarly, by  \eqref{N1}, for $i=1, 2, \dots, p-1$ we obtain
\begin{equation}
d^2(M_{i}, N_p) \geq \frac{1}{i+1}d^2(M_{0}, N_p) +\frac{i}{i+1}d^2(M_{i+1}, N_{p})-\frac{i}{ p^2} d^{2}(A,B).
\end{equation}
 In particular, taking $i=p-1$, we have
\begin{equation}\label{N2}
d^2(M_{p-1}, N_{p})\geq\frac{1}{p} d^2(M_{0}, N_{p})+ \frac{p-1}{p} d^2(M_{p}, N_{p})-(p-1)\frac{d^2(A,B)}{p^2}.
\end{equation}
 Note that  $d(M_{p-1},N_0)=\frac{p-1}{p} d(A,B)$, $d(M_0,N_p)=d(A,C)$ and $d(M_p,N_p)=d(B,C)$.
From \eqref{MN1}, \eqref{M2} and \eqref{N2}, we have
\[
\begin{split}
 d^2(M_{p-1}, N_{p-1})&\geq \frac{1}{2}d^2(M_{p-1}, N_{p-2}) +\frac{1}{2}d^2(M_{p-1}, N_{p})-\frac{d^2(A,C)}{p^2}\\
&\geq  \frac{1}{2}\big( \frac{1}{p-1} d^2(M_{p-1}, N_{0})+\frac{p-2}{p-1} d^2(M_{p-1}, N_{p-1})-(p-2)\frac{d^2(A,C)}{p^2} \big)\\
&+ \frac{1}{2}\big( \frac{1}{p} d^2(M_{0}, N_{p})+\frac{p-1}{p} d^2(M_{p}, N_{p})-(p-1)\frac{d^2(A,B)}{p^2} \big) -\frac{d^2(A,C)}{p^2}\\
&=\frac{p-1}{2p^2}d^{2}(A,B)+ \frac{p-2}{2(p-1)} d^2(M_{p-1},N_{p-1})-\frac{p-2}{2p^2}d^(A,C)+\frac{1}{2p}d^2(A,C)\\
&+\frac{p-1}{2p}d^2(B,C)-\frac{p-1}{2p^2} d^2(A,B)-\frac{1}{p^2}d^2(A,C)\\
&=\frac{p-2}{2(p-1)} d^2(M_{p-1},N_{p-1})+\big(\frac{p-1}{2p^2} -\frac{p-1}{2p^2}\big)d^{2}(A,B) \\
& +\big( \frac{1}{2p}-\frac{p-2}{2p^2}-\frac{1}{p^2}\big) d^2(A,C) +\frac{p-1}{2p}d^2(B,C) \\
&=\frac{p-2}{2(p-1)} d^2(M_{p-1},N_{p-1})+\frac{p-1}{2p}d^2(B,C).
\end{split}
\]
Therefore,  \[ \big(1- \frac{p-2}{2(p-1)}\big)d^2(M_{p-1},N_{p-1}) \geq \frac{p-1}{2p}d^2(B,C).\]
Simplifying, we get 
$d(M_{p-1},N_{p-1}) \geq \frac{(p-1)}{ p}d(B,C),
$
and hence
\[ 
\frac{p-1}{p} d(B,C) \leq d(A\sharp_{\frac{p-1}{p}} B, A\sharp_{\frac{p-1}{p}} C), \quad p=1, 2, \dots 
\]
\end{proof}

Next, we extend Lemma \ref{forp} to show that $t d(B,C) \leq d(A\sharp_t B, A\sharp_t C)$ for any $t\in [0,1].$

\begin{theorem}\label{fort2}
Let $A, B, C\in  \Gr_{n,k}(\F)$ lie within a metric ball $\B$ of radius less than $\rho_\F$. Then
\begin{equation}\label{anyt2}
 d(A\sharp_t B, A\sharp_t C) \geq t d(B,C), \quad t\in [0,1].
\end{equation}
\vspace{-1cm}
\begin{figure}[H]
\begin{center}
\includegraphics[width=0.4\textwidth, angle=0]{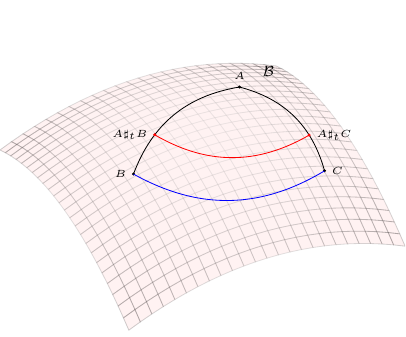}
\end{center}
\caption{$ d(A\sharp_t B, A\sharp_t C) \geq t d(B,C), \quad t\in [0,1]$.}
\end{figure}
\end{theorem}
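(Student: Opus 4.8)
The plan is to leverage Lemma \ref{forp} together with the self-similarity of geodesics encoded in the composition identity \eqref{st}, and then to pass from a dense set of rational $t$ to all of $[0,1]$ by continuity. Concretely, I would introduce the set of admissible exponents
\[
S=\Big\{t\in[0,1]\ :\ d(A\sharp_t B, A\sharp_t C)\ge t\,d(B,C)\ \text{for all } A,B,C\in\B\Big\},
\]
where the inequality is demanded uniformly over every triple in $\B$, and aim to prove $S=[0,1]$. The endpoints are immediate: $0\in S$ since $d(A,A)=0$, and $1\in S$ since $d(A\sharp_1 B,A\sharp_1 C)=d(B,C)$.

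The first key step is to show that $S$ is closed under multiplication. Given $s,t\in S$ and any $A,B,C\in\B$, I set $B'=A\sharp_t B$ and $C'=A\sharp_t C$; because $\B$ is convex, these points lie on the geodesics from $A$ to $B$ and from $A$ to $C$, hence remain in $\B$. By \eqref{st} we have $A\sharp_s B'=A\sharp_{st}B$ and $A\sharp_s C'=A\sharp_{st}C$, so applying the defining inequality for $s$ to the triple $(A,B',C')$ gives
\[
d(A\sharp_{st}B,A\sharp_{st}C)=d(A\sharp_s B',A\sharp_s C')\ge s\,d(B',C')=s\,d(A\sharp_t B,A\sharp_t C).
\]
Applying the inequality for $t$ to $(A,B,C)$ yields $d(A\sharp_t B,A\sharp_t C)\ge t\,d(B,C)$, and combining the two estimates gives $d(A\sharp_{st}B,A\sharp_{st}C)\ge st\,d(B,C)$. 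Since $A,B,C$ were arbitrary, $st\in S$.

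The second step is to produce enough elements of $S$ to generate a dense subset of $[0,1]$. Lemma \ref{forp} gives $\frac{p-1}{p}\in S$ for every positive integer $p$. Telescoping the product $\prod_{k=2}^{n}\frac{k-1}{k}=\frac{1}{n}$ together with multiplicative closure shows $\frac{1}{n}\in S$ for all $n$. A short induction on the denominator then shows every rational $\frac{m}{n}\in(0,1]$ lies in $S$: when $m\le n-1$ one writes $\frac{m}{n}=\frac{m}{n-1}\cdot\frac{n-1}{n}$, where $\frac{m}{n-1}\in S$ by the inductive hypothesis and $\frac{n-1}{n}$ is a generator, while $m=n$ gives $1\in S$. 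Hence $S\supseteq\mathbb{Q}\cap(0,1]$, which is dense in $[0,1]$. Finally I close the gap by continuity: the curves $t\mapsto A\sharp_t B$ and $t\mapsto A\sharp_t C$ are continuous in $t$ (they are the geodesics \eqref{geodesic1}), and the metric $d$ is continuous, so $t\mapsto d(A\sharp_t B,A\sharp_t C)-t\,d(B,C)$ is continuous on $[0,1]$ and nonnegative on the dense set $S$, hence nonnegative everywhere, giving \eqref{anyt2}.

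The step I expect to be the main obstacle is the multiplicative-closure argument, which is the conceptual heart of the extension: it requires recognizing that rescaling the parameter corresponds, via \eqref{st}, to restarting the geodesics from the interior points $B'=A\sharp_t B$ and $C'=A\sharp_t C$, and it must be checked that these auxiliary points stay inside $\B$ so that Lemma \ref{forp} and the inequality for $s$ genuinely apply to them. Once this self-similar reduction is established, the telescoping that generates all rationals in $(0,1]$ and the continuity extension are routine.
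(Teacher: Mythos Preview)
Your proposal is correct and follows essentially the same route as the paper's proof: both apply Lemma \ref{forp} not to $B,C$ directly but to auxiliary points on the geodesics $AB$ and $AC$, using the composition identity \eqref{st} to convert this into a recursive comparison, then telescope the factors $\frac{k}{k+1}\cdot\frac{k+1}{k+2}\cdots\frac{p-1}{p}=\frac{k}{p}$ to obtain all rationals and finish by continuity. Your packaging via the multiplicatively closed set $S$ is a slightly more abstract rephrasing of the paper's direct telescoping chain $d(A\sharp_{k/p}B,A\sharp_{k/p}C)\ge\frac{k}{k+1}\,d(A\sharp_{(k+1)/p}B,A\sharp_{(k+1)/p}C)\ge\cdots$, but the mathematical content is identical.
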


\begin{proof}
Let $k$ and $p$ be positive integers with $k\leq p$. Let 
\[
B_0=A\sharp_{\frac{k+1}{p}}B,\quad C_0=A\sharp_{\frac{k+1}{p}}C.
\]
By Lemma \ref{forp}, we have
\[
 d(A_{\frac{k}{k+1}}B_0, A\sharp_{\frac{k}{k+1}} C_0) \geq \frac{k}{k+1} d(B_0,C_0). 
\]
Using \eqref{st}, we obtain
\[
A\sharp_{\frac{k}{k+1}}B_0=A\sharp_{\frac{k}{k+1}}\big( A\sharp_{\frac{k+1}{p}}B\big)=A\sharp_{\frac{k}{p}}B,
\quad
A\sharp_{\frac{k}{k+1}}C_0=A\sharp_{\frac{k}{k+1}}\big( A\sharp_{\frac{k+1}{p}}C\big)=A\sharp_{\frac{k}{p}}C.
\]
They lead to
 \[
 d(A\sharp_{\frac{k}{p}}B, A\sharp_\frac{k}{p} C) \geq \frac{k}{k+1} d(A_{\frac{k+1}{p}}B, A\sharp_\frac{k+1}{p} C) \quad \mbox{for all } k\leq q.
 \]
Inductively,
\[ \begin{split}
d(A\sharp_{\frac{k}{p}}B, A\sharp_\frac{k}{p} C)&\geq \frac{k}{k+1} d(A_{\frac{k+1}{p}}B, A\sharp_\frac{k+1}{p} C)\\
&\geq  \frac{k}{k+1} \frac{k+1}{k+2}d(A_{\frac{k+2}{p}}B, A\sharp_\frac{k+2}{p} C)\\
& \ \vdots\\
&\geq  \frac{k}{k+1} \frac{k+1}{k+2} \cdots \frac{p-1}{p}  d(B,C)=\frac{k}{p} d(B,C).
\end{split}
 \]
Thus $d(A\sharp_t B, A\sharp_t C) \geq t d(B,C)$ holds for any rational number  $t\in [0,1]$. 
Finally, by continuity, we conclude that $d(A\sharp_t B, A\sharp_t C) \geq t d(B,C)$ for any $t\in [0,1].$ 
\end{proof}

\begin{remark}\label{remark_nonnegative_curvature}
Theorem \ref{fort2} also holds for any non-negative curvature manifold $\mathcal{M}$ within a locally convex ball $\mathcal{B}$, where the geodesic between any two points in $\mathcal{B}$ is unique. Specifically, if $A, B, C \in \mathcal{M}$ are contained within such a ball $\mathcal{B}$ of radius less than $\rho_\mathcal{M}$, then for any $t \in [0,1]$, the inequality
\[
d(A\sharp_t B, A\sharp_t C) \geq t d(B,C)
\]
holds.
Moreover, the technique we use to prove the generalized inequality \eqref{anyt2} in this section can be adapted to the analogous inequality $d(A\sharp_t B, A\sharp_t C) \leq t d(B,C)$ for non-positive curvature manifolds, with the main difference being that the inequality holds in the opposite direction and globally.
\end{remark}

For the hyperbolic $\P_n$, the distance function $d_{\P_n}(\cdot, \cdot)$ in \eqref{d_P} is convex globally \cite [Corollary 3.11]{LLT14} on $[0,1]$, i.e.,
\begin{equation}
d_{\P_n}(B_1\sharp B_2, C_1\sharp C_2) \leq (1-t)d_{\P_n}(B_1,C_1)+t d_{\P_n}(B_2,C_2), \quad t\in [0,1],
\end{equation}
holds globally. In contrast, the numerical examples below tell us that the distance function $d(\cdot, \cdot)$ for the Grassmannians is neither convex nor concave globally.

\begin{example}\label{3.5}
Consider the following four matrices in $\Gr_{2,2}(\R)$:
\[B_1=\begin{pmatrix} 0.9414 & 0.2348\\ 0.2348 & 0.0586 \end{pmatrix} \quad B_2=\begin{pmatrix}  0.9998 & 0.0144\\ 0.0144 & 0.0002 \end{pmatrix}\]
\[C_1=\begin{pmatrix}   0.9969  &  -0.0560\\-0.0560 &0.0031\end{pmatrix} \quad
 C_2=\begin{pmatrix}  0.1533& 0.3603\\ 0.3603 &0.8467 \end{pmatrix}.\]
The unitary matrices $(I-2B_1)(I-2B_2)$, $(I-2C_1)(I-2C_2)$, $(I-2B_1)(I-2C_1)$, $(I-2B_2)(I-2C_2)$ have no
negative eigenvalues.
From \eqref{dist},  the distances are $d(B_1,C_1)=1.6321$,
$d(B_2,C_2)=0.4250$ and $d(B_1\sharp B_2, C_1\sharp C_2)=0.6035$.
Then $$d(B_1\sharp B_2, C_1\sharp C_2) <\frac{1}{2}d(B_1,C_1)+\frac{1}{2}d(B_2,C_2).$$\end{example}

\begin{example}\label{3.6}
Consider the following four matrices in $\Gr_{2,2}(\R)$:
\[B_1=\begin{pmatrix}
0.2793  &  0.4486\\
    0.4486  &  0.7207
 \end{pmatrix} \quad B_2=\begin{pmatrix}  
0.4345 &  -0.4957\\
   -0.4957   & 0.5655
\end{pmatrix}\]
\[C_1=\begin{pmatrix}  
0.7691  &  0.4214\\
    0.4214  &  0.2309
\end{pmatrix} \quad
 C_2=\begin{pmatrix}  0.6708 &  -0.4699\\
   -0.4699  &  0.3292
\end{pmatrix}.\]
The unitary matrices $(I-2B_1)(I-2B_2)$, $(I-2C_1)(I-2C_2)$, $(I-2B_1)(I-2C_1)$, $(I-2B_2)(I-2C_2)$ have no 
negative eigenvalues.
From \eqref{dist},  the distances are $d(B_1,C_1)=0.7251$,
$d(B_2,C_2)= 0.3395$ and $d(B_1\sharp B_2, C_1\sharp C_2)=1.8589$.
Then $$d(B_1\sharp B_2, C_1\sharp C_2) > \frac{1}{2} d(B_1,C_1)+\frac{1}{2}d(B_2,C_2).$$\end{example}
 
Note that we are not sure if $d(B_1\sharp B_2, C_1\sharp C_2) \geq (1-t)d(B_1,C_1)+t d(B_2,C_2)$ holds for  $t\in [0,1]$ when $B_1$, $B_2$, $C_1$ and $C_2$ are within the metric ball $\B$ as it would involve $\rho_\F$. 
\begin{remark}
The reverse versions of inequalities (\ref{midineq}) and (\ref{anyt2}) for $\P_n$  can be found in \cite{RB07}, reflecting the hyperbolic geometry of $\P_n$.
These results were further extended to symmetric spaces of noncompact type  \cite [Theorem 3.10]{LLT14}.
\end{remark}

\section{Conclusion}

In this paper, we provided the geometric properties of Grassmannians, focusing on distance inequalities and geodesic structures. We first established the Semi-Parallelogram Law and the Law of Cosines, providing foundational insights into the nature of geodesic triangles in Grassmannians. We then proved that the inequality $d(A \sharp B, A \sharp C) \geq \frac{1}{2} d(A, B)$ holds for geodesic triangles with sufficiently close vertices $A$, $B$, and $C$.
Moreover, we extended this result to a more general setting by proving that for any $t \in [0, 1]$, the inequality $ d(A\sharp_t B, A\sharp_t C) \geq t d(B,C)$ holds. 
this work lays a foundation for further exploration of Grassmannian structures and their applications in various mathematical and computational domains.

\vspace{.5cm}
\noindent{\bf Declaration of competing interest}\\
The authors declare that they have no known competing financial interests or personal relationships that could have appeared to influence the work reported in this paper.

\vspace{.5cm}
\noindent{\bf Data availability}\\
No data was used for the research described in the article.

\bibliographystyle{abbrv}
\bibliography{refs}

\end{document}